
%
%
%
%

\documentclass[10pt]{article}
\usepackage{mathpazo}
\usepackage{amsmath}
\usepackage{amsthm}
\usepackage{amssymb}
\usepackage{url}
\usepackage{latexsym}
\usepackage[xcolor=pst]{pstricks}
\usepackage{graphicx, pst-plot, pst-node, pst-text, pst-tree}
\usepackage{titlefoot}
\usepackage[small]{titlesec}
\usepackage{units} 
\usepackage[small,it]{caption}

\setlength{\captionmargin}{0.4in}
\setlength{\abovecaptionskip}{0pt}

\usepackage{color}
\definecolor{lightgray}{rgb}{0.8, 0.8, 0.8}
\definecolor{darkgray}{rgb}{0.7, 0.7, 0.7}
\definecolor{darkblue}{rgb}{0, 0, .4}

\usepackage[bookmarks]{hyperref}
\hypersetup{
        colorlinks=true,
        linkcolor=darkblue,
        anchorcolor=darkblue,
        citecolor=darkblue,
        urlcolor=darkblue,
        pdfpagemode=UseThumbs,
        pdftitle={Large Infinite Antichains of Permutations},
        pdfsubject={Combinatorics},
        pdfauthor={Albert, Brignall, and Vatter},
}

\newcounter{todocounter}

\newcommand{\minisec}[1]{\noindent{\sc #1.}}

\theoremstyle{plain}
\newtheorem{theorem}{Theorem}[section]
\newtheorem{proposition}[theorem]{Proposition}

\theoremstyle{definition}

\setlength{\textwidth}{6in}
\setlength{\textheight}{8in}
\setlength{\topmargin}{0in}
\setlength{\headsep}{0.25in}
\setlength{\headheight}{0.25in}
\setlength{\oddsidemargin}{0.25pt}
\setlength{\evensidemargin}{0.25pt}
\makeatletter
\newfont{\footsc}{cmcsc10 at 8truept}
\newfont{\footbf}{cmbx10 at 8truept}
\newfont{\footrm}{cmr10 at 10truept}
\pagestyle{plain}

\renewenvironment{abstract}%
                {
                  \begin{list}{}%
                     {\setlength{\rightmargin}{1in}%
                      \setlength{\leftmargin}{1in}}%
                   \item[]\ignorespaces\begin{small}}%
                 {\end{small}\unskip\end{list}}

\newcommand{\Av}{\operatorname{Av}}

\newcommand{\C}{\mathcal{C}}

\newcommand{\gr}{\mathrm{gr}}
\newcommand{\lgr}{\underline{\gr}}
\newcommand{\ugr}{\overline{\gr}}

%

%

%

%
\newcommand{\st}{\::\:}

\newcommand{\lesup}{\textrm{\tiny $\le$}}
%
%
%
%
%

\datefoot{\today}
\amssubj{05A05, 05A15}

\newpagestyle{main}[\small]{
        \headrule
        \sethead[\usepage][][]
        {\sc Large Infinite Antichains of Permutations}{}{\usepage}}

\setlength{\parindent}{0pt}
\setlength{\parskip}{1.5ex}

\title{\sc Large Infinite Antichains of Permutations}
\author{%
Michael H. Albert\\[-0.25ex]
\small Department of Computer Science\\[-0.5ex]
\small University of Otago\\[-0.5ex]
\small Dunedin, New Zealand\\[1.5ex]
Robert Brignall\footnotemark[\value{footnote}]\footnote{Brignall was partially supported by the EPSRC Grant EP/J006130/1.}\\[-0.25ex]
\small Department of Mathematics and Statistics\\[-0.5ex]
\small The Open University\\[-0.5ex]
\small Milton Keynes, England\\[1.5ex]
Vincent Vatter\footnotemark[\value{footnote}]\footnote{Vatter was partially supported by the NSA Young Investigator Grant H98230-12-1-0207.}\\[-0.25ex]
\small Department of Mathematics\\[-0.5ex]
\small University of Florida\\[-0.5ex]
\small Gainesville, Florida USA\\[-1.5ex]
}

\titleformat{\section}
        {\large\sc}
        {\thesection.}{1em}{}

\date{}

\begin{document}
\maketitle

\pagestyle{main}

\begin{abstract}
Infinite antichains of permutations have long been used to construct interesting permutation classes and counterexamples.  We prove the existence and detail the construction of infinite antichains with arbitrarily large growth rates.  As a consequence, we show that every proper permutation class is contained in a class with a rational generating function. While this result implies the conclusion of the Marcus-Tardos theorem, that theorem is used in our proof.
\end{abstract}

\section{Introduction}


The permutation $\pi$ of length $n$ {\it contains\/} the permutation $\sigma$ of length $k$, written $\sigma\le\pi$, if $\pi$ has a not-necessarily-contiguous subsequence of length $k$ in the same relative order as $\sigma$.  For example, $\pi=391867452$ (written in list, or one-line notation) contains $\sigma=51342$, as can be seen by considering the subsequence $91672$ ($=\pi(2),\pi(3),\pi(5),\pi(6),\pi(9)$).

Since the earliest studies on permutation patterns, it has been known that the set of permutations ordered by containment contains infinite antichains (sets of pairwise incomparable elements): Pratt~\cite{pratt:computing-permu:} constructed such a set in his studies of double-ended queues in 1973.  This fact should not be surprising.  While the celebrated Minor Theorem of Robertson and Seymour~\cite{robertson:graph-minors-i-xx:} shows that graphs \emph{ordered by the minor relation} do not contain an infinite antichain, the minor relation is not analogous to the containment order on permutations.  The containment order on permutations is much more similar to the induced subgraph order on graphs, and under this order graphs clearly do contain infinite antichains; one such antichain consists of all cycles, while another consists of the \emph{split-end paths} shown in Figure~\ref{fig-U-graphs}.

\begin{figure}
\begin{center}
\psset{xunit=0.01in, yunit=0.01in}
\psset{linewidth=0.005in}
\begin{pspicture}(40,0)(320,40)
\pscircle*(40,0){0.04in}
\pscircle*(40,40){0.04in}
\pscircle*(70,20){0.04in}
\pscircle*(100,20){0.04in}
\pscircle*(130,20){0.04in}
\pscircle*(160,20){0.04in}
\rput[c](181,20){$\cdots$}
\pscircle*(200,20){0.04in}
\pscircle*(230,20){0.04in}
\pscircle*(260,20){0.04in}
\pscircle*(290,20){0.04in}
\pscircle*(320,0){0.04in}
\pscircle*(320,40){0.04in}
\psline(40,0)(70,20)
\psline(40,40)(70,20)
\psline(70,20)(160,20)
\psline(200,20)(290,20)
\psline(290,20)(320,0)
\psline(290,20)(320,40)
\end{pspicture}
\end{center}
\caption{The antichain of split-end paths.}\label{fig-U-graphs}
\end{figure}
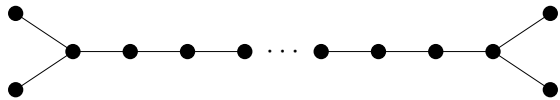

In fact, from the graph antichain of split-end paths we can easily construct an infinite antichain of permutations.  Given a permutation $\pi$ of length $n$, its \emph{inversion graph} is the graph $G_\pi$ on the vertices $\{1,\dots,n\}$ where $i\sim j$ if and only if $i<j$ and $\pi(i)>\pi(j)$.  If $\sigma\le\pi$ then $G_\sigma$ is an induced subgraph of $G_\pi$ (although the reverse does not necessarily hold), so to construct an infinite antichain of permutations, we need only find a set of permutations whose inversion graphs are split-end paths.

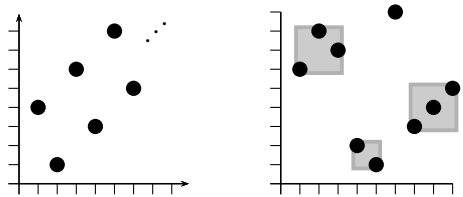
\begin{figure}
\begin{center}
\begin{tabular}{ccc}
\psset{xunit=0.01in, yunit=0.01in}
\psset{linewidth=0.005in}
\begin{pspicture}(0,0)(90,90)
\psaxes[dy=10, Dy=1, dx=10, Dx=1,tickstyle=bottom,showorigin=false,labels=none]{->}(0,0)(89,89)
\pscircle*(10,40){4.0\psxunit}
\pscircle*(20,10){4.0\psxunit}
\pscircle*(30,60){4.0\psxunit}
\pscircle*(40,30){4.0\psxunit}
\pscircle*(50,80){4.0\psxunit}
\pscircle*(60,50){4.0\psxunit}
\pstextpath[c]{\psline[linecolor=white](65,78)(75,88)}{$\dots$}
\end{pspicture}
&\rule{10pt}{0pt}&
\psset{xunit=0.01in, yunit=0.01in}
\psset{linewidth=0.005in}
\begin{pspicture}(0,0)(90,90)
\psaxes[dy=10,Dy=1,dx=10,Dx=1,tickstyle=bottom,showorigin=false,labels=none](0,0)(90,90)
\psframe[linecolor=darkgray,fillstyle=solid,fillcolor=lightgray,linewidth=0.02in](7,57)(33,83)
\psframe[linecolor=darkgray,fillstyle=solid,fillcolor=lightgray,linewidth=0.02in](37,7)(53,23)
\psframe[linecolor=darkgray,fillstyle=solid,fillcolor=lightgray,linewidth=0.02in](67,27)(93,53)
\pscircle*(10,60){0.04in}
\pscircle*(20,80){0.04in}
\pscircle*(30,70){0.04in}
\pscircle*(40,20){0.04in}
\pscircle*(50,10){0.04in}
\pscircle*(60,90){0.04in}
\pscircle*(70,30){0.04in}
\pscircle*(80,40){0.04in}
\pscircle*(90,50){0.04in}
\end{pspicture}
\end{tabular}
\end{center}
\caption{On the left, the plot of the increasing oscillating sequence.  On the right, the plot of the permutation $687219345=3142[132,21,1,123]$.  }\label{fig-int-and-osc}
\end{figure}

To this end, we define the {\it increasing oscillating sequence\/} as the sequence
$$
4,1,6,3,8,5,\dots,2k+2,2k-1,\dots
$$
(see Figure~\ref{fig-int-and-osc}).  For $m\ge 4$, let $\sigma_m$ denote the permutation in the same relative order as
\begin{itemize}
\item the first $m$ entries of the increasing oscillating sequence if $m$ is even, or
\item the least $m$ entries (by value) of the increasing oscillating sequence if $m$ is odd.
\end{itemize}
The inversion graph of $\sigma_m$ is a path of length $m$, so to find a set of permutations whose inversion graphs are split-end paths, we merely need to ``blow up'' the ``endpoints'' of $\sigma_m$.  As this operation is crucial to our constructions, we describe it in some detail.

An \emph{interval} in the permutation $\pi$ is a set of contiguous indices $I=\{a,a+1,\dots,b\}$ such that the set $\{\pi(i)\st i\in I\}$ is also contiguous.  Every permutation $\pi$ of length $n$ has \emph{trivial intervals} of lengths $0$, $1$, and $n$, and other intervals are called \emph{proper}.  A permutation with no proper intervals is called \emph{simple}.

Simple permutations are precisely those that do not arise from a non-trivial inflation, in the following sense.  Given a permutation $\sigma$ of length $m$ and nonempty permutations $\alpha_1,\dots,\alpha_m$, the \emph{inflation} of $\sigma$ by $\alpha_1,\dots,\alpha_m$,  denoted $\sigma[\alpha_1,\dots,\alpha_m]$, is the permutation of length $|\alpha_1|+\cdots+|\alpha_m|$ obtained by replacing each entry $\sigma(i)$ by an interval that is order isomorphic to $\alpha_i$ in such a way that the intervals are order isomorphic to $\sigma$.  For example,
\[
2413[1,132,321,12]=4\ 798\ 321\ 56.
\]
It can be established that every permutation is the inflation of a unique simple permutation, called its \emph{simple quotient} and, moreover, that the intervals in such an inflation are unique unless the simple quotient is $12$ or $21$.

\begin{figure}
\begin{center}
\begin{tabular}{ccccccc}
\psset{xunit=0.01in, yunit=0.01in}
\psset{linewidth=0.005in}
\begin{pspicture}(0,0)(110,110)
\psaxes[dy=10,Dy=1,dx=10,Dx=1,tickstyle=bottom,showorigin=false,labels=none](0,0)(110,110)
\psframe[linecolor=darkgray,fillstyle=solid,fillcolor=lightgray,linewidth=0.02in](17,7)(33,23)
\psframe[linecolor=darkgray,fillstyle=solid,fillcolor=lightgray,linewidth=0.02in](97,87)(113,103)
\pscircle*(10,40){0.04in}
\pscircle*(20,10){0.04in}
\pscircle*(30,20){0.04in}
\pscircle*(40,60){0.04in}
\pscircle*(50,30){0.04in}
\pscircle*(60,80){0.04in}
\pscircle*(70,50){0.04in}
\pscircle*(80,110){0.04in}
\pscircle*(90,70){0.04in}
\pscircle*(100,90){0.04in}
\pscircle*(110,100){0.04in}
\end{pspicture}
&
\rule{3pt}{0pt}
&
\psset{xunit=0.01in, yunit=0.01in}
\psset{linewidth=0.005in}
\begin{pspicture}(0,0)(120,120)
\psaxes[dy=10,Dy=1,dx=10,Dx=1,tickstyle=bottom,showorigin=false,labels=none](0,0)(120,120)
\psframe[linecolor=darkgray,fillstyle=solid,fillcolor=lightgray,linewidth=0.02in](17,7)(33,23)
\psframe[linecolor=darkgray,fillstyle=solid,fillcolor=lightgray,linewidth=0.02in](97,107)(113,123)
\pscircle*(10,40){0.04in}
\pscircle*(20,10){0.04in}
\pscircle*(30,20){0.04in}
\pscircle*(40,60){0.04in}
\pscircle*(50,30){0.04in}
\pscircle*(60,80){0.04in}
\pscircle*(70,50){0.04in}
\pscircle*(80,100){0.04in}
\pscircle*(90,70){0.04in}
\pscircle*(100,110){0.04in}
\pscircle*(110,120){0.04in}
\pscircle*(120,90){0.04in}
\end{pspicture}
&
\rule{3pt}{0pt}
&
\psset{xunit=0.01in, yunit=0.01in}
\psset{linewidth=0.005in}
\begin{pspicture}(0,0)(110,110)
\psaxes[dy=10,Dy=1,dx=10,Dx=1,tickstyle=bottom,showorigin=false,labels=none](0,0)(110,110)
\pscircle*(10,40){0.04in}
\pscircle*(20,10){0.04in}
\pscircle*(30,20){0.04in}
\pscircle*(40,60){0.04in}
\pscircle*(50,30){0.04in}
\pscircle*(60,80){0.04in}
\pscircle*(70,50){0.04in}
\pscircle*(80,110){0.04in}
\pscircle*(90,70){0.04in}
\pscircle*(100,90){0.04in}
\pscircle*(110,100){0.04in}
\psline(10,40)(20,10)
\psline(10,40)(30,20)
\psline(10,40)(50,30)
\psline(40,60)(50,30)
\psline(40,60)(70,50)
\psline(60,80)(70,50)
\psline(60,80)(90,70)
\psline(80,110)(90,70)
\psline(80,110)(100,90)
\psline(80,110)(110,100)
\end{pspicture}
&
\rule{3pt}{0pt}
&
\psset{xunit=0.01in, yunit=0.01in}
\psset{linewidth=0.005in}
\begin{pspicture}(0,0)(120,120)
\psaxes[dy=10,Dy=1,dx=10,Dx=1,tickstyle=bottom,showorigin=false,labels=none](0,0)(120,120)
\pscircle*(10,40){0.04in}
\pscircle*(20,10){0.04in}
\pscircle*(30,20){0.04in}
\pscircle*(40,60){0.04in}
\pscircle*(50,30){0.04in}
\pscircle*(60,80){0.04in}
\pscircle*(70,50){0.04in}
\pscircle*(80,100){0.04in}
\pscircle*(90,70){0.04in}
\pscircle*(100,110){0.04in}
\pscircle*(110,120){0.04in}
\pscircle*(120,90){0.04in}
\psline(10,40)(20,10)
\psline(10,40)(30,20)
\psline(10,40)(50,30)
\psline(40,60)(50,30)
\psline(40,60)(70,50)
\psline(60,80)(70,50)
\psline(60,80)(90,70)
\psline(80,100)(90,70)
\psline(80,100)(120,90)
\psline(100,110)(120,90)
\psline(110,120)(120,90)
\end{pspicture}
\end{tabular}
\end{center}
\caption{Two members of the infinite antichain $U$ on the left, with their inversion graphs displayed on the right.}\label{fig-first-antichains}
\end{figure}
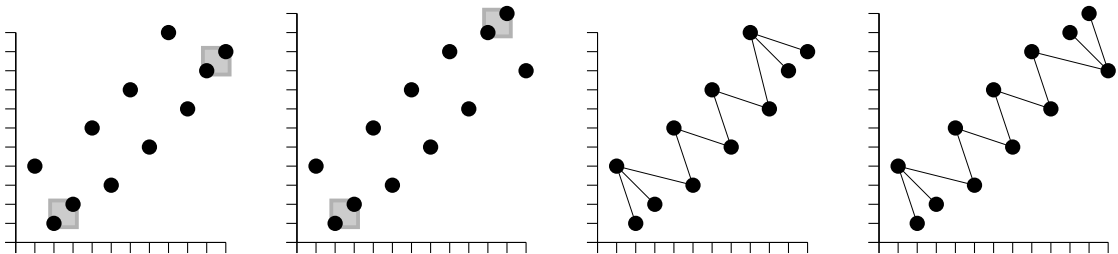

We can now describe an infinite antichain of permutations, which we call $U$, whose inversion graphs are split-end paths.  The elements of $U$ are formed by inflating the ``endpoints'' of $\sigma_m$ by the interval $12$ for all $m\ge 4$: if $m$ is even we inflate the least and greatest entries of $\sigma_m$, while if $m$ is odd we inflate the least and rightmost entries of $\sigma_m$.  This antichain contains one permutation of each length $n\ge6$.

In the remainder of this introduction, we give some background on permutation classes, and in particular, the role that infinite antichains have played in the investigation of permutation classes.  In the following two sections we detail our construction of large infinite antichains and prove that every proper permutation class is contained in a class with a rational generating function.

\minisec{Permutation classes}
A \emph{permutation class} (which we often abbreviate to \emph{class}) is a downset (or, to use a graph-theoretic notion, hereditary property) of permutations under the containment order; thus if $\C$ is a permutation class, $\pi\in\C$, and $\sigma\le\pi$ then $\sigma\in\C$.  Permutation classes can be specified by the minimal permutations \emph{not} in the class, which necessarily form an antichain and which we call the \emph{basis}.  In other words, for each permutation class there is a (possibly infinite) antichain $B$ such that
$$
\C=\Av(B)=\{\pi: \pi \not \geq\beta\mbox{ for all } \beta \in B\}.
$$

Permutation classes can also be specified more positively in terms of the permutations they do contain.  Given a set $X$ of permutations, we define its \emph{closure} as
$$
X^\lesup=\{\sigma\st \sigma\le\pi\mbox{ for some }\pi\in X\}.
$$

Much of the research on permutation classes has focused on their exact and asymptotic enumeration.  Given a set $X$ of permutations (notably a permutation class, or an infinite antichain), we denote by $X_n$ the set of permutations in $X$ of length $n$.  The \emph{generating function} of $X$ is then
$$
\sum_{\mbox{\footnotesize nonempty }\pi\in X} x^{|\pi|}=\sum_{n\ge 1}|X_n|x^n.
$$
(As a matter of convention, except when explicitly stated otherwise, we do not include the empty permutation in our generating functions.)  The \emph{upper and lower growth rates} of this set $X$ are defined, respectively, by
\begin{eqnarray*}
\ugr(X)&=&\limsup_{n\rightarrow\infty} \sqrt[n]{|X_n|},\\
\lgr(X)&=&\liminf_{n\rightarrow\infty} \sqrt[n]{|X_n|}.
\end{eqnarray*}
If $\lgr(X)=\ugr(X)$, then we call this quantity the \emph{proper growth rate} of $X$ and denote it by $\gr(X)$.  Pringsheim's Theorem, below, connects exact and asymptotic enumeration.

\newtheorem*{pringsheimsthm}{Pringsheim's Theorem}
\begin{pringsheimsthm}[see Flajolet and Sedgewick~{\cite[Section IV.3]{flajolet:analytic-combin:}}]
The upper growth rate of the set $X$ of permutations is equal to the reciprocal of the least positive singularity of its generating function.
\end{pringsheimsthm}

For permutation classes, the Marcus-Tardos Theorem~\cite{marcus:excluded-permut:} (formerly the Stanley-Wilf Conjecture) states every proper permutation class has a \emph{finite} upper growth rate; here \emph{proper} means that the class omits at least one permutation.  It is not known whether proper permutation classes have proper growth rates.

\minisec{The role of infinite antichains in the study of permutation classes}
Infinite antichains have been used frequently to construct exotic permutation classes with counterintuitive properties.  Perhaps the first such result is due to Murphy~\cite{murphy:restricted-perm:}.  Noonan and Zeilberger~\cite{noonan:the-enumeration:} had conjectured%
\footnote{Technically, Noonan and Zeilberger made a seemingly stronger conjecture, but Atkinson~\cite{atkinson:restricted-perm:} proved that their stronger claim is equivalent to the form presented here.}
that every finitely based permutation class has a holonomic (or $D$-finite) generating function%
\footnote{It should be noted that Zeilberger has since repudiated his conjecture.  As reported by Elder and Vatter~\cite{elder:problems-and-co:}, at the \emph{Third International Conference on Permutation Patterns}, in 2005, Zeilberger conjectured that there are finitely based permutation classes with non-holonomic generating functions.  He went on to speculate that ``not even God knows $\Av_{1000}(1324)$.''}%
, meaning that the generating function for the class and all its derivatives generate a finite dimensional vector space over $\mathbb{C}(x)$.  Murphy showed that this ``finitely based'' hypothesis is essential, by observing that because there are infinite antichains of permutations, there are uncountably many permutation classes with different enumerations but only countably many holonomic generating functions with integer coefficients.

Another example is the membership problem: given a basis $B$ and a permutation $\pi$ of length $n$, how long does it take (in the worst case, as a function of $n$) to decide if $\pi\in\Av(B)$?  As there are only countably many algorithms, it follows trivially that there are permutation classes with undecidable membership problems.

A third and final example of the usefulness of infinite antichains concerns a conjecture of Balogh, Bollob\'as, and Morris~\cite{balogh:hereditary-prop:ordgraphs}.  They studied \emph{ordered graphs} under the induced subgraph order --- given graphs $G$ and $H$ on $\{1,\dots,n\}$ and $\{1,\dots,k\}$, respectively, we say that $H$ is an {\it ordered subgraph\/} of $G$ if there is an increasing injection $f:\{1,\dots,k\}\rightarrow\{1,\dots,n\}$ such that $i\sim_H j$ if and only if $f(i)\sim_G f(j)$.  Balogh, Bollob\'as, and Morris had conjectured that every upper growth rate of a hereditary property of ordered graphs is algebraic.  Every permutation class can be viewed as a hereditary property of ordered graphs, simply by considering ordered versions of the inversion graphs of the permutations in the class, and so their conjecture was stronger than the statement that every upper growth rate of a permutation class is algebraic.  Albert and Linton~\cite{albert:growing-at-a-pe:} disproved this conjecture, with a construction involving variations on the antichain $U$.  Vatter~\cite{vatter:permutation-cla} refined this technique to show that every real number greater than $2.48188$ is the upper growth rate of a permutation class.  Klazar~\cite{klazar:overview-of-som}, among others, has suggested that the Balogh-Bollob\'as-Morris Conjecture may still hold for finitely based classes of permutations / hereditary properties of ordered graphs, but, as with the Noonan-Zeilberger Conjecture, it is now clear that the finite basis hypothesis is necessary.

\section{The Construction}\label{sec-construction}

At the \emph{Third International Conference on Permutation Patterns} (see Elder and Vatter~\cite{elder:problems-and-co:}), the first author of this article asked whether there are antichains with arbitrarily large upper growth rates.  We begin by constructing such antichains.

Let $A$ be any antichain of permutations (we will typically take $A$ to be finite, but this does not matter for the construction), and $\alpha$ any permutation which is not contained in any member of $A$, i.e., $\alpha\notin A^\lesup$.  We construct the antichain $U_{A,\alpha}$ by inflating the oscillations $\sigma_m$ for $m\ge 4$ much as we did to form $U$:
\begin{itemize}
\item if $m$ is even, inflate the least and greatest entries of $\sigma_m$ by $\alpha$, and inflate all other entries of $\sigma_m$ by arbitrary elements of $A$;
\item if $m$ is odd, inflate the least and rightmost entries of $\sigma_m$ by $\alpha$, and inflate all other entries of $\sigma_m$ by arbitrary elements of $A$.
\end{itemize}
In this notation, the antichain $U$ we constructed in the Introduction is $U_{\{1\},12}$.

\begin{proposition}
\label{prop-antichain}
For any antichain $A$ and permutation $\alpha\notin A^\lesup$, the set $U_{A,\alpha}$ forms an infinite antichain.
\end{proposition}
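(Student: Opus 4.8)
The plan is to verify the two antichain conditions separately: first that the members of $U_{A,\alpha}$ are pairwise incomparable, and second that the set is infinite (this second part is immediate, since the construction produces permutations of unboundedly many lengths, so I will focus on incomparability). Fix two members $\pi = \sigma_m[\dots]$ and $\rho = \sigma_{m'}[\dots]$ of $U_{A,\alpha}$, and suppose toward a contradiction that $\pi \le \rho$. The first step is to understand where $\pi$ can embed inside $\rho$, using the oscillating skeleton. The key structural fact I would establish is that the two inflated ``endpoint'' intervals of $\rho$ (each order isomorphic to $\alpha$) are essentially the only places in $\rho$ that can accommodate a copy of $\alpha$: the remaining entries of $\rho$ sit in intervals order isomorphic to members of $A$, and any union of a bounded-overlap collection of such intervals along the oscillation cannot contain $\alpha$, because $\alpha \notin A^{\lesup}$ and the oscillation forces non-endpoint intervals to interact with each other only in the thin ``oscillating'' pattern, which contributes nothing that could complete a copy of $\alpha$ spanning two intervals.

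The heart of the argument is the following: in an embedding of $\pi$ into $\rho$, the two copies of $\alpha$ coming from $\pi$'s endpoints must land (almost entirely) inside the two copies of $\alpha$ sitting at $\rho$'s endpoints, and then the ``oscillating middle'' of $\pi$ — which, after deleting the endpoint intervals, is order isomorphic to a long increasing oscillating sequence (with single-point or $A$-inflated cells) — must embed into the corresponding middle of $\rho$ between $\rho$'s two endpoint copies of $\alpha$. Here I would invoke the well-known rigidity of oscillating sequences under containment: an increasing oscillating sequence of a given length embeds into another only in the ``obvious'' way, and in particular a longer one does not embed into a shorter one, even after the $A$-inflations (since each $A$-cell is a single interval and contributes only a bounded, order-isomorphic-to-$A$-member chunk, which cannot substitute for extra oscillation length). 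This pins down $m \le m'$ from the middle, while simultaneously the endpoint inflations force, by a parity/orientation bookkeeping on which entries of $\sigma_m$ and $\sigma_{m'}$ were inflated (least/greatest versus least/rightmost), that $m$ and $m'$ have matching structure near the ends; combining these, $\pi \le \rho$ forces $\pi = \rho$.

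More carefully, the steps in order are: (1) recall/record the precise shape of $\sigma_m$ and note that deleting its inflated endpoint intervals leaves a permutation whose inversion graph is a path, with a known ``zig-zag'' plot; (2) show that if $C$ is any interval of $\rho$ arising from the construction other than the two endpoint $\alpha$-intervals, then $C$ is order isomorphic to a member of $A$, and any ``staircase union'' of consecutive such intervals along the oscillation still avoids $\alpha$ — this is where $\alpha \notin A^{\lesup}$ does its work, together with the observation that consecutive oscillation cells overlap in only a trivial way so their union's downset is controlled; (3) conclude that both copies of $\alpha$ witnessing $\pi \le \rho$ must be absorbed into $\rho$'s two endpoint intervals, one into each; (4) after removing those, reduce to an embedding of one (long, lightly $A$-inflated) oscillation into another and apply oscillation rigidity to force $m \le m'$ and, by orientation of the oscillation, $m = m'$; (5) with $m = m'$ and the skeletons aligned, check that each non-endpoint cell of $\pi$ must embed into the single corresponding cell of $\rho$, and since both cells are members of $A$ and $A$ is an antichain, equality of the cells follows, giving $\pi = \rho$.

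The main obstacle I anticipate is step (2) — controlling what a union of several consecutive $A$-inflated oscillation cells can contain. One has to rule out the possibility that a copy of $\alpha$ sneaks across the boundary between two adjacent cells by using a few points from each; this requires exploiting the specific geometry of the increasing oscillating sequence (adjacent cells sit in a very restricted positional/value relationship, contributing at most a single inversion between them) to argue that any such cross-cell copy of $\alpha$ would already yield a copy of $\alpha$ inside one cell, contradicting $\alpha \notin A^{\lesup}$. A secondary but more routine obstacle is making the oscillation-rigidity claim in step (4) precise in the presence of the $A$-inflations; I expect this follows from a standard analysis of the inversion graph (a path plus pendant decorations of bounded structure) and the fact that a path on $m$ vertices is not an induced subgraph of a path on fewer vertices, which transfers back to permutation containment via the inversion-graph homomorphism noted in the introduction.
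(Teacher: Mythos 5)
There is a genuine gap, and it sits exactly where you anticipated trouble: your steps (2)--(3). You claim that a copy of $\alpha$ in a member $\rho$ of $U_{A,\alpha}$ cannot straddle several of the $A$-inflated cells, arguing that any cross-cell copy ``would already yield a copy of $\alpha$ inside one cell, contradicting $\alpha\notin A^\lesup$.'' That is false in general. Two cells of $\rho$ lying in increasing (or in skew) relative position have as their union a direct (or skew) sum of two members of $A$, and such a sum can easily contain $\alpha$ even though neither summand does: take $A=S_2$ and $\alpha=123\notin A^\lesup$; two non-adjacent cells each inflated by $12$ form a copy of $1234$, which contains $\alpha$. (Similarly $12\ominus 12=3412$ contains $231$ and $312$, so even adjacent cells are not safe; note also that at the inflated level two cells coming from an inversion of the skeleton have \emph{all} pairs inverted between them, not ``at most a single inversion.'') So the hypothesis $\alpha\notin A^\lesup$ does not localize arbitrary copies of $\alpha$ to the endpoint intervals, and your step (3) --- and hence the reduction in step (4), which presupposes that the embedding respects the cell structure --- is unjustified as written.

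The paper avoids this entirely by never trying to locate arbitrary copies of $\alpha$. It invokes the standard structure of inflations of simple permutations: if $\sigma,\pi\in U_{A,\alpha}$ and $\sigma\le\pi$, then either $\sigma$ embeds into a single maximal proper interval of $\pi$, or the simple quotient of $\sigma$ (an increasing oscillation) embeds into the simple quotient of $\pi$ in such a way that each maximal interval of $\sigma$ maps into a single corresponding interval of $\pi$. The first alternative is impossible since $\sigma$ properly contains $\alpha$ while every interval of $\pi$ is either a member of $A$ (so avoids $\alpha$) or a copy of $\alpha$ itself. In the second alternative one only needs the weak statement that $\alpha$ does not embed in any single non-endpoint interval --- which is precisely $\alpha\notin A^\lesup$ --- to force the $\alpha$-intervals of $\sigma$ onto the $\alpha$-intervals of $\pi$; the rigidity of increasing oscillations then forces equal length, and the antichain property of $A$ forces equality cell by cell. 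To repair your argument you would need either to prove this interval-respecting dichotomy (at which point you are reproducing the paper's proof) or to carry out a genuinely delicate direct analysis of how an embedding can scatter the endpoint $\alpha$'s across cells; the inversion-graph/path heuristic you cite does not by itself do this, since inversion-graph containment is strictly weaker than permutation containment.
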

\begin{proof}
Suppose that $\sigma\le\pi$ for $\sigma,\pi\in U_{A,\alpha}$.  By the definition of simple permutations, it follows that $\sigma$ must embed into a single interval of $\pi$, or the simple quotient of $\sigma$ (the increasing oscillation that was inflated to form $\sigma$) must embed into the simple quotient of $\pi$.  The first possibility cannot occur because $\alpha\le\sigma$, and thus $\sigma$ could only embed into the two intervals of $\pi$ containing $\alpha$, but this is impossible because $\sigma>\alpha$.

Now consider the second possibility.  Both $\sigma$ and $\pi$ were formed by inflating the smallest and  the greatest/rightmost entries of their simple quotients by $\alpha$. Since $\alpha$ cannot embed in any other interval, the smallest maximal proper interval of $\sigma$ must embed into the smallest maximal proper interval of $\pi$, and similarly with the greatest/rightmost maximal proper intervals.  However, since the simple quotients of $\sigma$ and $\pi$ are increasing oscillations, this can only be achieved if they are of the same length. Finally, since the remaining maximal proper intervals of $\sigma$ and $\pi$ are order isomorphic to permutations in $A$, and since $A$ is an antichain, we conclude that the only way $\sigma$ could embed into $\pi$ is if $\sigma=\pi$.
\end{proof}

Now consider the enumeration of $U_{A,\alpha}$.  The generating function for the increasing oscillations of length at least $4$ which are inflated in the construction of $U_{A,\alpha}$ is
$$
\frac{x^4}{1-x}=x^4+x^5+\cdots,
$$
so if $a(x)$ denotes the generating function for $A$ and $\alpha$ has length $k+1$, the generating function for $U_{A,\alpha}$ is
$$
\frac{x^{2k+2}a^2(x)}{1-a(x)}.
$$
By Pringsheim's Theorem, the upper growth rate of $U_{A,\alpha}$ is the reciprocal of the least positive solution to $a(x)=1$ or possibly, in the case where $A$ is infinite, the reciprocal of the least singularity of $a(x)$.

Now set $A=S_k$, the set of all permutation of length $k$ (which is trivially an antichain), and take $\alpha$ to be any permutation of length $k+1$.  We see that $a(x)=k!x^k$, so the least positive solution of $a(x)=1$ is
$$
x=\frac{1}{\sqrt[k]{k!}}\sim \frac{e}{k}
$$
by Stirling's Formula.  We therefore conclude that $\ugr(U_{S_k,\alpha})\rightarrow\infty$ as $k\rightarrow\infty$.

We have constructed antichains of arbitrarily large \emph{upper} growth rates, but every element of $U_{S_k,\alpha}$ has length congruent to $2$ modulo $k$, so these antichains do not have proper growth rates.  To fix this problem, we alter our construction slightly.  Choose an arbitrary permutation $\tau$ of length $k-1$, and set
$$
A_\tau=\{\tau\}\cup\{\pi\in S_k\st \tau\not\le \pi\}.
$$
It is not difficult to see that every permutation of length $k-1$ is contained in $(k-1)^2+1=k^2-2k+2$ permutations of length $k$, so the generating function of $A_\tau$ is
$$
x^{k-1}+\left(k!-k^2+2k-2\right)x^k.
$$
Clearly we still have $\ugr(U_{A_\tau, \alpha})\rightarrow\infty$ as $k\rightarrow\infty$, but now we claim that this antichain has a proper growth rate.

\begin{proposition}
\label{prop-antichain-gr}
For any permutations $\tau$ of length $k-1$ and $\alpha$ of length $k+1$, the antichain $U_{A_\tau,\alpha}$ (with $A_\tau$ constructed as above) has a proper growth rate, which tends to $\infty$ as $k\rightarrow\infty$.
\end{proposition}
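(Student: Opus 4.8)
The plan is to analyse the generating function of $U_{A_\tau,\alpha}$, which is rational, and to show that it has a single singularity of smallest modulus, which is moreover a simple pole; proper growth then drops out of a routine partial-fraction computation. Abbreviate $c=k!-k^2+2k-2$, and recall from the discussion preceding the proposition that the generating function of $A_\tau$ is $a(x)=x^{k-1}+cx^k$ and hence that of $U_{A_\tau,\alpha}$ is
$$
f(x)=\frac{x^{2k+2}a^2(x)}{1-a(x)}.
$$
We have $c>0$ whenever $k\ge 3$ (as $k!>(k-1)^2+1$ there); the case $k=2$ is degenerate --- then $\tau=1$, $A_\tau=\{1\}$, $a(x)=x$, and $U_{A_\tau,\alpha}$ contains exactly one permutation of each length $n\ge 8$, so its proper growth rate is $1$ --- so we may assume $c>0$.

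First I would pin down the dominant singularity. Because $a$ has nonnegative coefficients, satisfies $a(0)=0$, and increases strictly to $\infty$ on $[0,\infty)$, the equation $a(x)=1$ has a unique positive real solution $\rho$, and $a'(\rho)=(k-1)\rho^{k-2}+ck\rho^{k-1}>0$, so $\rho$ is a simple zero of $1-a$. As $x\to\rho^-$ the numerator of $f$ tends to $\rho^{2k+2}>0$ while $1-a(x)\to 0^+$, so $\rho$ is genuinely a pole of $f$; and it is the unique pole of smallest modulus, since if $|z|=\rho$ and $a(z)=1$ then
$$
\rho^{k-1}+c\rho^k=1=\bigl|z^{k-1}+cz^k\bigr|\le |z|^{k-1}+c|z|^k=\rho^{k-1}+c\rho^k,
$$
so equality holds in the triangle inequality for the two nonzero complex numbers $z^{k-1}$ and $cz^k$; as $c>0$ this forces $z$ to be positive real, whence $z=\rho$.

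Next I would read off the asymptotics. Writing the partial-fraction expansion $f(x)=C/(1-x/\rho)+g(x)$, where $g$ is a rational function whose poles all have modulus strictly greater than $\rho$ (plus a polynomial summand, which affects only finitely many coefficients), the local expansion $1-a(x)=a'(\rho)\rho\,(1-x/\rho)+O((x-\rho)^2)$ gives
$$
C=\lim_{x\to\rho}\Bigl(1-\tfrac{x}{\rho}\Bigr)f(x)=\frac{\rho^{2k+1}a^2(\rho)}{a'(\rho)}=\frac{\rho^{2k+1}}{a'(\rho)}>0.
$$
Hence $\bigl|(U_{A_\tau,\alpha})_n\bigr|=[x^n]f=C\rho^{-n}+[x^n]g$ with $[x^n]g=O(n^dR^{-n})$ for some $R>\rho$ and $d\ge 0$, so $\bigl|(U_{A_\tau,\alpha})_n\bigr|\sim C\rho^{-n}$; in particular $\sqrt[n]{\bigl|(U_{A_\tau,\alpha})_n\bigr|}\to 1/\rho$, so $\gr(U_{A_\tau,\alpha})=1/\rho$ exists. (Alternatively, one may invoke the transfer theorem for meromorphic functions of Flajolet and Sedgewick.) Finally, $c\rho^k\le\rho^{k-1}+c\rho^k=1$ gives $\rho\le c^{-1/k}$, so $\gr(U_{A_\tau,\alpha})=1/\rho\ge c^{1/k}=(k!-k^2+2k-2)^{1/k}\to\infty$ by Stirling's Formula.

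I expect the one genuinely delicate point to be the uniqueness of the dominant pole; everything surrounding it is bookkeeping. That uniqueness is exactly the triangle-inequality computation above, and it works precisely because the support $\{k-1,k\}$ of $a$ is not contained in an arithmetic progression of common difference greater than $1$ --- in contrast to the earlier antichain $U_{S_k,\alpha}$, whose denominator $1-k!x^k$ has $k$ roots of equal minimal modulus, reflecting the periodicity in the lengths that obstructs proper growth there.
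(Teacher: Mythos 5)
Your proposal is correct and follows essentially the same route as the paper: locate the dominant singularity at the unique positive root of $1-a(x)$ and show it is the only root of that modulus via an equality-in-the-triangle-inequality argument, which is exactly the paper's key step. Your additional bookkeeping (the partial-fraction asymptotics, the nonvanishing residue, the explicit bound $\rho\le c^{-1/k}$ for the growth rate tending to infinity, and the $k=2$ degenerate case) just fills in details the paper leaves implicit.
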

\begin{proof}
Let $c=\left(k!-k^2+2k-2\right)$, so the generating function for $U_{A_\tau,\alpha}$ is given by
$$
\frac{x^{2k}\left(x^{k-1}+cx^k\right)^2}{1-x^{k-1}-cx^k}.
$$
By Pringsheim's Theorem, we know that the upper growth rate of the corresponding sequence is determined by the least positive root of the denominator of this generating function, say $r$.  We will be done if we can establish that $r$ is the unique root of modulus $r$.  Suppose to the contrary that this denominator has another root of modulus $r$, say $\omega r$.  Then we see that
$$
1-r^{k-1}-cr^k=1-\omega^{k-1}r^{k-1}-c\omega^k r^k=0.
$$
Therefore, by equating both sides and canceling $r^{k-1}$, we see that
$$
\omega^{k-1}+c\omega^{k}r=1+cr.
$$
Now compare the moduli of both sides of this equation.  Since both terms on the right-hand side are positive and real, the only way the moduli could be equal is if $\omega^{k-1}=\omega^k=1$, which implies that $\omega=1$, as desired.
\end{proof}

\section{Rational Superclasses}\label{sec-rat-superclasses}

Suppose that we are given a proper permutation class $\C$.  Choose an integer $k$ and permutations $\alpha$ and $\tau$ so that three conditions are satisfied:
\begin{enumerate}
\item[(A1)] $\alpha$ has length $k+1$ and does not lie in $\C$,
\item[(A2)] $\tau$ has length $k-1$ and is not contained in $\alpha$, and
\item[(A3)] $\gr(U_{A_\tau,\alpha})>\ugr(\C)$.
\end{enumerate}
Note that (A1) can be satisfied because $\C$ is proper, (A2) can be satisfied because $\alpha$ contains at most ${k+1\choose 2}$ permutations of length $k-1$, which is less than $(k-1)!$ for $k\geq 6$, and (A3) can be ensured by Proposition~\ref{prop-antichain-gr} and the Marcus-Tardos Theorem.

Because $\alpha$ is not a member of $\C$ but is contained in every member of the antichain $U_{A_\tau,\alpha}$, we see that $U_{A_\tau, \alpha}\cap \C=\emptyset$.

We can now outline our approach.  Define
$$
\C_{A_\tau, \alpha}=\C\cup U^\lesup_{A_\tau,\alpha}.
$$
By our previous observations, $U_{A_\tau,\alpha}$ is a \emph{maximal} antichain in $\C_{A_\tau,\alpha}$, meaning that no element of $U_{A_\tau,\alpha}$ is contained in any other element of $\C_{A_\tau,\alpha}$.  It follows that $\C_{A_\tau,\alpha}\setminus X$ forms a permutation class for every subset $X\subseteq U_{A_\tau,\alpha}$.  By (A3) there is some integer $N$ so that for every $n\ge N$, $U_{A_\tau,\alpha}$ contains more permutations of length $n$ than $\C$.  For each $n\ge N$, let $X_n$ consist of $|\C_n|$ permutations of length $n$ chosen from $U_{A_\tau,\alpha}$.  Now form the class
$$
\C_{\textrm{rat}}=\C_{A_\tau,\alpha}\setminus (X_N\cup X_{N+1}\cup\cdots).
$$
This class contains precisely as many permutation of each length $n\ge N$ as $U^\lesup_{A_\tau,\alpha}$.  Therefore the generating function of $\C_{\textrm{rat}}$ will differ from that of $U_{A_\tau,\alpha}$ by a polynomial of degree at most $N-1$, and thus will be rational if the generating function of $U_{A_\tau,\alpha}$ is rational.  Thus the following proposition will complete the proof of our desired result.

\begin{proposition}
\label{prop-antichain-rat}
Suppose that the permutation $\alpha$ of length $k+1$ does not contain the permutation $\tau$ of length $k-1$ and that $A_\tau$ is constructed as described in the previous section.  Then the generating function of $U^\lesup_{A_\tau,\alpha}$ is rational.
\end{proposition}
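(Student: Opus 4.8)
The plan is to describe the permutations of $U^\lesup_{A_\tau,\alpha}$ explicitly enough to read them off with a finite automaton, whose transfer matrix then delivers a rational generating function.

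The first step is a structural description of a typical $\pi\in U^\lesup_{A_\tau,\alpha}$. By definition $\pi\le\sigma_m[\gamma_1,\dots,\gamma_m]$ for some $m\ge 4$, where $\gamma_i=\alpha$ for the two ``endpoint'' positions of $\sigma_m$ (the minimum and the maximum for even $m$, the minimum and the last position for odd $m$) and $\gamma_i\in A_\tau$ otherwise. Choosing a nonempty $\delta_i\le\gamma_i$ for $i$ in a set $S$ and $\delta_i$ empty otherwise, $\pi$ is order isomorphic to $\rho[\delta_i:i\in S]$, where $\rho=\sigma_m|_S$ is a subpermutation of an increasing oscillation. Since every increasing oscillation occurs as a subsequence of the single increasing oscillating sequence, $\rho$ lies in the fixed class $\mathcal{O}$ of all patterns of that sequence, and the inversion graph of $\rho$, being an induced subgraph of a path, is a disjoint union of paths; hence $\rho=\rho^{(1)}\oplus\cdots\oplus\rho^{(r)}$ with each block sum-indecomposable and having a path as inversion graph (a ``connected oscillation fragment''). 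Tracking where the two $\alpha$-cells of $\sigma_m$ can land, one sees that in this decomposition an $\alpha$-cell can only sit over the minimum element of $\rho$, its maximum element, or its last position, that at most two such upgrades occur, and that every other $\delta_i$ is a nonempty element of $A_\tau^\lesup$; moreover each cell has length at most $k+1$. Conversely, every permutation admitting such a description lies in $U^\lesup_{A_\tau,\alpha}$, save possibly for finitely many permutations of bounded length, which I would handle separately.

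With the structure in hand I would compute the generating function. The class $\mathcal{O}$ is ``thin'': its connected fragments form a small, rigidly structured family built one point at a time (in an increasing oscillation each point is inverted only with its one or two neighbours along the oscillation), and a general member of $\mathcal{O}$ is a direct sum of such fragments. Consequently $\mathcal{O}$, even decorated with a bounded amount of extra information marking the minimum element, the maximum element, the final point, and whether (at most two) $\alpha$-upgrades have been used, is generated by a finite automaton reading the permutation from bottom to top and, within each block, from left to right; the corresponding refined generating function is therefore rational. It then remains to substitute: replace each undecorated point of the backbone by the generating function $q(x)$ of the nonempty members of $A_\tau^\lesup$, and each point carrying an $\alpha$-upgrade by the generating function $p(x)$ of the nonempty members of $\{\alpha\}^\lesup$. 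Both $p$ and $q$ are polynomials, so substituting them into the rational generating function of the decorated backbone yields a rational function; adding back the finitely many corrections from the first step shows that the generating function of $U^\lesup_{A_\tau,\alpha}$ is rational.

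The main obstacle is the bookkeeping in the structural step. One must verify precisely which configurations of $\alpha$-cells over the backbone are achievable as $\sigma_m$ and $S$ vary; check that deleting cells, which can disconnect the oscillation, yields exactly direct sums of oscillation fragments and introduces nothing new; and fix a canonical form for each element so that the automaton counts it exactly once. This analysis is essentially of the same flavour as the proof of Proposition~\ref{prop-antichain}, using the simplicity of the $\sigma_m$ together with $\alpha\notin A_\tau^\lesup$. Once it is pinned down, the transfer-matrix computation and the passage to a rational generating function are routine.
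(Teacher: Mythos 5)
Your structural skeleton coincides with the paper's: elements of $U^\lesup_{A_\tau,\alpha}$ are direct sums of inflated increasing-oscillation fragments in which $\alpha$ can only decorate the minimum of the first summand and the maximum or rightmost entry of the last, and once that is established the paper simply multiplies explicit ``beginning,'' ``middle,'' and ``ending'' generating functions --- your transfer-matrix phrasing is the same computation in different clothing. The genuine gap is in the counting scheme you propose for the decorated cells. Substituting $p(x)$, the generating function of all nonempty patterns of $\alpha$, at an $\alpha$-marked cell overcounts: by (A2) every \emph{proper} subpermutation of $\alpha$ has length at most $k$ and avoids $\tau$, hence already lies in $A_\tau^\lesup$ (permutations of length at most $k-1$ all do, and $\tau$-avoiders of length $k$ lie in $A_\tau$ itself), so a marked cell filled with a proper pattern of $\alpha$ produces exactly the same permutation as an unmarked cell filled with that same element of $A_\tau^\lesup$, which your $q(x)$ also counts. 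This observation --- the only place the hypothesis $\tau\not\le\alpha$ enters --- is the heart of the paper's proof: it lets one assume every surviving marked cell is a \emph{full} copy of $\alpha$ (contributing $x^{k+1}$, not $p(x)$), after which the closure splits cleanly into sum-indecomposable permutations containing two full copies of $\alpha$ and permutations $\beta\oplus\mu\oplus\eta$ with at most one full copy at each end, and the count is exact with no canonical-form machinery needed.

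Your plan never uses $\tau\not\le\alpha$, which is a warning sign given that it is an explicit hypothesis of the proposition; without the reduction above, the ``fix a canonical form'' step you defer is not routine bookkeeping but essentially the entire content of the proof. You would also still have to settle the achievability question you mention only in passing: in the last summand, each of the two increasing oscillations of every length $m\ge 3$ has exactly one entry that can carry $\alpha$, while $21$ offers two choices and a single point one, which is what produces the paper's ending factor $1+x^{k+1}+2x^{k+1}a/(1-a)$. So the decomposition you describe is right, but as written the proposal does not yet yield an exact (hence provably rational) generating function without importing the paper's key use of (A2).
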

\begin{proof}
We first briefly review the \emph{sum decomposition} of permutations.  Given two permutations $\pi$ and $\sigma$, we denote the inflation $12[\pi,\sigma]$ by $\pi\oplus\sigma$.  A permutation is said to be {\it sum indecomposable\/} if it cannot be written as the direct sum of two shorter permutations%
\footnote{Equivalently, the permutation $\pi$ is sum indecomposable if and only if its inversion graph $G_\pi$ is connected, and thus these permutations are themselves called \emph{connected} by some authors.}.
Note that every permutation has a unique representation as a sum of sum indecomposable permutations.

It is easy to verify that deleting any entry from an increasing oscillation results in either a shorter increasing oscillation or a sum decomposable permutation.  Indeed, the only entries one may delete to create a shorter increasing oscillation are those which we inflate by $\alpha$ in order to form the antichain $U_{A_\tau,\alpha}$.  We need one more observation: because $\tau\not\le \alpha$ by (A2), if we delete any entries from $\alpha$, we obtain a permutation in $A_\tau^\lesup$.

Let $a$ denote the generating function of $A^\lesup_\tau$.  Clearly $a$ is a polynomial; to be more precise, it is given by
$$
a = x+2x^2+6x^3+\cdots+(k-1)!x^{k-1}+\left(k!-k^2+2k-2\right)x^k,
$$
but this level of detail is unnecessary for our analysis.  Now divide the elements of $U^\lesup_{A_\tau,\alpha}$ into two groups:
\begin{itemize}
\item[(C1)] sum indecomposable permutations which contain two occurrences of $\alpha$, and
\item[(C2)] all other permutations.
\end{itemize}

We begin with (C1).  By definition, all such permutations have two occurrences of $\alpha$, which the generating function accounts for with an $x^{2k+2}$ factor.  In addition to these two intervals, in order for these permutations to be sum indecomposable, all other entries of the underlying increasing oscillating sequence must be inflated by nonempty permutations from $A^\lesup_\tau$.  Thus the generating function for permutations in (C1) is
$$
\frac{x^{2k+2}a^2}{1-a}.
$$
Note that this generating function includes the contribution of the antichain itself.

Next we consider permutations of the form (C2).  These permutations can be decomposed in the form $\beta\oplus\mu\oplus\eta$, where:
\begin{itemize}
\item $\beta$ is either empty or is formed by inflating the least entry of an increasing oscillation by $\alpha$ and all other entries by nonempty permutations in $A^\lesup_\tau$,
\item $\mu$ is either empty or the sum of increasing oscillations with all entries inflated by $A^\lesup_\tau$, and
\item $\eta$ is either empty or is formed by inflating the greatest or rightmost entry of an increasing oscillation by $\alpha$ and all other entries by nonempty permutations in $A^\lesup_\tau$.
\end{itemize}
Note that both $\beta$ and $\eta$ are sum indecomposable.  We determine generating functions for each of these three parts separately.

For each $m\ge 1$, there is precisely one increasing oscillation of length $m$ which may be inflated to obtain a suitable ``beginning'' $\beta$.  Therefore the generating function for these permutations is
$$
1+\frac{x^{k+1}}{1-a},
$$
where the $1$ accounts for the fact that $\beta$ may be empty.  To count suitable ``middles'' $\mu$, we first count the sum indecomposable permutations of this form.  The sequence of sum indecomposable permutations of length $m$ contained in some increasing oscillation is easily seen to be enumerated by the sequence $1,1,2,2,\dots$ for $m\ge 1$, and thus has the generating function $(x+x^3)/(1-x)$.  Thus the generating function for inflations of these permutations by intervals from $A^\lesup_\tau$ is can be obtained by substituting $a$ for $x$ in this generating function, and the generating function for (possibly empty) sums of these permutations is
$$
\frac{1}{1-(a+a^3)/(1-a)}
=
\frac{1-a}{1-2a-a^3}.
$$
Finally, we need to count ``endings'' $\eta$.  Inflations of the unique increasing oscillation of length $1$ contribute $x^{k+1}$ to this generating function.  Inflations of the unique increasing oscillation of length $2$ (the permutation $21$) contribute $2x^{k+1}a$ to this generating function, as either entry may be inflated by $\alpha$.  Finally, there are $2$ increasing oscillations of each length $m\ge 3$, and each of these increasing oscillations has a unique entry which may inflated by $\alpha$, so these contribute
$$
2\frac{x^{k+1}a^2}{1-a}
$$
to the generating function.  Combining these terms and accounting for the possibility that $\eta$ is empty shows that the generating function for suitable endings $\eta$ is
$$
1+x^{k+1}+2\frac{x^{k+1}a}{1-a}.
$$

Now we can compute that the generating function of $U^\lesup_{A_\tau,\alpha}$ is
$$
\underbrace{\frac{x^{2k+2}a^2}{1-a}}_{\mbox{\footnotesize type (C1)}}
+
\underbrace{\left[
\left(1+\frac{x^{k+1}}{1-a}\right)
\left(\frac{1-a}{1-2a-a^3}\right)
\left(1+x^{k+1}+2\frac{x^{k+1}a}{1-a}\right)
-1
\right]}_{\footnotesize \mbox{type (C2), with the empty permutation removed}},
$$
which is clearly rational, as desired.
\end{proof}

Our main result now follows from our previous remarks and Proposition~\ref{prop-antichain-rat}.

\begin{theorem}
\label{thm-rat-superclasses}
Every proper permutation class is contained in a permutation class with a rational generating function.
\end{theorem}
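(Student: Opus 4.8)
The plan is to stitch together the three propositions with the construction already sketched above. Let $\C$ be a proper permutation class. First I would fix parameters $k$, $\alpha$, $\tau$ satisfying (A1)--(A3): (A1) is available because $\C$ omits some permutation, which can be padded to a permutation of length $k+1$; (A2) holds because $\alpha$ has at most $\binom{k+1}{2}$ subpermutations of length $k-1$, which is fewer than $(k-1)!$ once $k\ge 6$; and (A3) holds because $\gr(U_{A_\tau,\alpha})\to\infty$ as $k\to\infty$ by Proposition~\ref{prop-antichain-gr}, while $\ugr(\C)<\infty$ by the Marcus--Tardos Theorem. Since $\alpha$ is contained in every member of $U_{A_\tau,\alpha}$ but $\alpha\notin\C$, we have $U_{A_\tau,\alpha}\cap\C=\emptyset$.

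Second, I would form $\C_{A_\tau,\alpha}=\C\cup U^\lesup_{A_\tau,\alpha}$, which is a downset as a union of two downsets. The one genuinely new step is to check that $U_{A_\tau,\alpha}$ is a \emph{maximal} antichain in $\C_{A_\tau,\alpha}$: if $\pi\in U_{A_\tau,\alpha}$ and $\pi<\rho$ for some $\rho\in\C_{A_\tau,\alpha}$, then either $\rho\in\C$, whence $\pi\in\C$, contradicting $U_{A_\tau,\alpha}\cap\C=\emptyset$; or $\rho\le\pi'$ for some $\pi'\in U_{A_\tau,\alpha}$, whence $\pi<\pi'$, contradicting Proposition~\ref{prop-antichain}. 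Because no element of $U_{A_\tau,\alpha}$ lies strictly below any element of $\C_{A_\tau,\alpha}$, deleting any subset $X\subseteq U_{A_\tau,\alpha}$ from $\C_{A_\tau,\alpha}$ again leaves a downset: the downward closure of any surviving permutation cannot meet $X$.

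Finally comes the enumeration bookkeeping. By (A3) there is an integer $N$ such that $U_{A_\tau,\alpha}$ contains more permutations of length $n$ than $\C$ does for every $n\ge N$. For each such $n$ I would choose a subset $X_n\subseteq U_{A_\tau,\alpha}$ of size $|\C_n\setminus U^\lesup_{A_\tau,\alpha}|$ (this is at most $|\C_n|$, hence available) and set $\C_{\mathrm{rat}}=\C_{A_\tau,\alpha}\setminus\bigcup_{n\ge N}X_n$, a permutation class by the previous paragraph. Since each $X_n\subseteq U_{A_\tau,\alpha}$ is disjoint from $\C$, we retain $\C\subseteq\C_{\mathrm{rat}}$; and for each $n\ge N$ the deletions exactly offset the contribution of $\C_n\setminus U^\lesup_{A_\tau,\alpha}$, so $\C_{\mathrm{rat}}$ has exactly as many permutations of length $n$ as $U^\lesup_{A_\tau,\alpha}$ does. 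Hence the generating functions of $\C_{\mathrm{rat}}$ and of $U^\lesup_{A_\tau,\alpha}$ differ by a polynomial of degree less than $N$, and the latter is rational by Proposition~\ref{prop-antichain-rat}, so the former is rational as well. The main obstacle beyond invoking the cited propositions is the maximal-antichain deletion observation of the second paragraph; once that is in place, the rest is routine.
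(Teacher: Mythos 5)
Your proposal is correct and follows essentially the same route as the paper: choose $k,\alpha,\tau$ satisfying (A1)--(A3), form $\C\cup U^\lesup_{A_\tau,\alpha}$, use maximality of the antichain to delete elements length-by-length, and invoke Propositions~\ref{prop-antichain-gr} and~\ref{prop-antichain-rat}. In fact your choice of $|X_n|=|\C_n\setminus U^\lesup_{A_\tau,\alpha}|$ (rather than $|\C_n|$) and your explicit check that $U_{A_\tau,\alpha}$ is a maximal antichain in $\C_{A_\tau,\alpha}$ make the bookkeeping slightly more careful than the paper's own sketch.
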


Note that Theorem~\ref{thm-rat-superclasses} is prima facie stronger than the Marcus-Tardos Theorem.  Of course, the Marcus-Tardos Theorem has played a crucial role in its proof.

Lest the reader be concerned that one could do even better than the construction in Section~\ref{sec-construction} and construct super-exponential antichains, we conclude with the following corollary of the Marcus-Tardos Theorem.

\begin{proposition}
Every antichain of permutations has a finite upper growth rate.
\end{proposition}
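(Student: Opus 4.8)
The plan is to deduce this immediately from the Marcus--Tardos Theorem. If the antichain $A$ is empty (or, more generally, finite) the upper growth rate is $0$ and there is nothing to prove, so fix any $\beta\in A$. Every other element of $A$ is incomparable to $\beta$ and, in particular, does not contain $\beta$; hence $A\setminus\{\beta\}\subseteq\Av(\beta)$. Since $\Av(\beta)$ omits $\beta$ it is a proper permutation class, and so the Marcus--Tardos Theorem gives $\ugr(\Av(\beta))<\infty$.

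To finish, I would simply compare counting functions length by length. For every $n>|\beta|$ we have $A_n\subseteq\Av(\beta)_n$, so $|A_n|\le|\Av(\beta)_n|$, and therefore
$$
\ugr(A)=\limsup_{n\to\infty}\sqrt[n]{|A_n|}\le\limsup_{n\to\infty}\sqrt[n]{|\Av(\beta)_n|}=\ugr(\Av(\beta))<\infty .
$$
The finitely many lengths $n\le|\beta|$ (where $A$ differs from $A\setminus\{\beta\}$) do not affect the $\limsup$, so this bound is valid.

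There is essentially no obstacle here: the only point to observe is that deleting a single element of an antichain places the remainder inside an avoidance class, which reduces the statement to Marcus--Tardos. One could add the remark that the resulting bound $\ugr(A)\le\ugr(\Av(\beta))$ holds for \emph{every} $\beta\in A$ and is sharpest when $\beta$ is taken to be a shortest element of $A$, but no such refinement is needed for the claim as stated.
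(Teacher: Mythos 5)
Your proposal is correct and follows essentially the same argument as the paper: remove a single element $\beta$ of the antichain, note that incomparability forces the rest of $A$ into the proper class $\Av(\beta)$ (the paper phrases this via the class $A^\lesup\setminus\{\alpha\}$, which likewise omits $\alpha$), and apply the Marcus--Tardos Theorem to bound the upper growth rate.
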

%
\begin{proof}
Let $A$ be an antichain of permutations and choose some $\alpha\in A$.  Because $A$ is an antichain, $A^\lesup\setminus\{\alpha\}$ is a permutation class not containing $\alpha$.  Therefore $A^\lesup\setminus\{\alpha\}$ has a finite upper growth rate, which serves as an upper bound for the upper growth rate of $A$ itself.
\end{proof}

\bibliographystyle{acm}
\bibliography{../refs}

\end{document}